\def\@seccntformat#1{\csname the#1\endcsname.\ } 
\date{}
\newenvironment{proof}[1][\hspace{-1.0ex}]%
{\noindent\par\addvspace{1mm}{\sc Proof\hspace{1.0ex}{#1}.} }%
{\quad$\blacktriangle$\par\addvspace{1mm}}
\newif\ifNoRemark
\def\addtheorem#1#2#3#4{
\ifthenelse{\equal{#2}{}}{}%
{\ifthenelse{\expandafter\isundefined\csname the#2\endcsname}{\newcounter{#2}}{}}
\newenvironment{#1}[1][\global\NoRemarktrue]
{\par\addvspace{2mm plus 0.5mm minus 0.2mm}\noindent 
{\bf #3}\ifthenelse{\equal{#2}{}}{}%
{\refstepcounter{#2}{\bf ~\csname the#2\endcsname}}%
{\bf \vphantom{##1}\ifNoRemark.\ \else\ (##1).\fi}\begingroup #4}%
{\endgroup\par\addvspace{1mm plus 0.5mm minus 0.2mm}\global\NoRemarkfalse}
\expandafter\newcommand\csname b#1\endcsname{\begin{#1}}
\expandafter\newcommand\csname e#1\endcsname{\end{#1}}
}
\title{Perfect colorings of the $12$-cube that attain\\ the bound on correlation immunity%
\footnotetext{The work was supported by the RFBR grants 05-01-00816 and 06-01-00694.}%
}
\author{D.~G.~Fon-Der-Flaass}
\begin{document}
\maketitle
\begin{abstract}
 We construct perfect $2$-colorings of the $12$-hypercube that attain our recent bound on the dimension of arbitrary correlation immune functions. We prove that such colorings with parameters $(x,12-x,4+x,8-x$) exist if $x=0$, $2$, $3$ and do not exist if $x=1$.
 
This is a translation into English of the original paper by D.~G.~Fon-Der-Flaass, 
``Perfect colorings of the $12$-cube that attain the bound on correlation immunity'', 
published in Russian in Siberian Electronic Mathematical Reports \cite{0}.
\end{abstract}
Let $H_n$ be the hypercube of dimension $n$. Its vertices are the binary vectors of length $n$ (we will identify such a vector with the set of its nonzero coordinates); two vertices are adjacent if their vectors differ in exactly one coordinate. A coloring of the vertices into black and white colors is called a perfect coloring with parameters $(a,b,c,d)$ if every black vertex has $a$ black and $b$ white neighbors and every white vertex has $c$ black and $d$ white neighbors. (For a general definition of perfect coloring and main properties, see \cite{1}, \cite{3}.)

In \cite{2}, it is proved that for every perfect $2$-coloring of $H_n$ with $b\ne c$, it holds $$c-a\le n/3 $$
(the correlation-immunity bound).
Two series of colorings attaining this bound are known. They are obtained  from a perfect code in the three-dimensional cube and from the Tarannikov coloring with parameters $(1,5,3,3)$ (see \cite{4}) by the product construction \cite[Proposition~1(c)]{1} and have parameters $(0,3k,k,2k)$ and $(k,5k,3k,3k)$, respectively.
Every perfect coloring that attain the correlation-immunity bound has parameters $(i,3x-i,i+x,2x-i)$, the hypercube dimension being $3x$. Without loss of generality we will assume $i<x$. 
In the current work, we determine for which $i$ such colorings of the $12$-dimensional hypercube ($x=4$) exist.

If $i=0$ and $i=2$, then we get parameters that belong to the families mentioned above;  hence colorings exist in these cases.

\begin{theorem} 
There are no perfect colorings of $H=H_{12}$ with parameters $(1,11,5,7)$.
\end{theorem}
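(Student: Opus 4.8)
The plan is to first pin down the global invariants forced by the parameters, then reduce to the rigid local geometry of the black set, and only afterwards attempt the combinatorial contradiction. From $(a,b,c,d)=(1,11,5,7)$ and the edge count $b|B|=c|W|$ one gets $|B|=1280$, $|W|=2816$; the quotient matrix with rows $(1,11)$ and $(5,7)$ has eigenvalues $12$ and $-4$, so the nonconstant part of the indicator $f$ lies in the single eigenspace of $H_{12}$ for eigenvalue $-4$, that is, in Fourier level $8$. I would record three consequences to lean on later: the coloring is correlation immune of order $7$, so every subcube of codimension at most $7$ contains exactly the expected fraction $5/16$ of black vertices; the function $16f-5$ is a two-valued function (values $11$ and $-5$) supported purely on level $8$; and, since level-$8$ characters are fixed by bit-complementation, the black set satisfies $B=\overline B$.

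The decisive structural input is $a=1$: every black vertex has a unique black neighbour, so $B$ decomposes into $640$ \emph{dominoes} (pairs of adjacent black vertices). Since $H_{12}$ is triangle-free, the two cells of a domino have no common neighbour, and for a domino in direction $i$ every further vertex of every $2$-face through its edge is white; hence each $2$-face carries $0$, $1$, or $2$ black vertices, a $2$-black face being either a domino (two adjacent blacks, the opposite pair white) or a pair of diagonal blacks whose two common neighbours are white. I would introduce the direction multiplicities $m_1,\dots,m_{12}$ with $\sum_i m_i=640$, note that each $m_i$ is even (from the coordinate balance $m_i+2p_i=640$ on the $640$ black vertices of a layer $x_i=0$), and set up the layer picture: splitting by a coordinate $i$, the $m_i$ crossing dominoes occupy \emph{identical} positions in the two $11$-cubes $x_i=0$ and $x_i=1$, while every other domino lies inside a single layer.

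Before hunting a contradiction I would verify — and this is the crucial cautionary step — that the cheap global invariants are \emph{not} enough. The distance distribution of $B$ is forced by the Fourier support $\{0,8\}$: writing the number of ordered black pairs at distance $k$ as $B_k=\alpha\,{12\choose k}+\beta\,K_k(8)$ (with $K_k(8)$ the Krawtchouk value) and fixing $\alpha,\beta$ from $B_0=1280$ and $B_1=1280$ (the dominoes) gives $\alpha=400$, $\beta=880$, after which every $B_k$ turns out to be a nonnegative integer; for instance $B_2=28160$, which agrees with the direct count of diagonal black pairs. The face counts, the codimension-$\le 7$ subcube balances, and the evenness of the $m_i$ are all internally consistent as well. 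Thus no first- or second-order count can exclude the coloring, and the obstruction must lie in the mutual arrangement of the dominoes.

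The remaining, and hardest, step is therefore a genuinely local-to-global analysis. I would fix a white vertex $w$ whose five black neighbours lie in a direction-set $J$ with $|J|=5$; the matching condition forces the domino of each such neighbour to point \emph{out} of $J$, since otherwise the vertex $w\oplus e_j\oplus e_{j'}$ would acquire two black neighbours, leaving only the $7$ directions of $[12]\setminus J$ available. Classifying the possible link configurations of dominoes around a white vertex (and dually around a domino), and then propagating these local types through the exact $5/16$-balance in the $4$- and $5$-dimensional subcubes together with the complement symmetry $B=\overline B$, should produce a finite system of linear and parity constraints on the numbers of local configurations of each type; the theorem follows once this system is shown to be infeasible over the nonnegative integers. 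I expect this infeasibility to be the main difficulty: every coarser invariant has already been seen to be consistent, so disentangling how dominoes of the twelve directions can simultaneously meet all the subcube balances seems to require a careful, and perhaps partly computer-assisted, case analysis of the local types.
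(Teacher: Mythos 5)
There is a genuine gap: your proposal never derives the contradiction. Everything up to the last paragraph is a (correct) catalogue of necessary conditions --- $|B|=1280$, Fourier support in level $8$, complement symmetry, the domino decomposition, evenness of the $m_i$, the distance distribution --- all of which you yourself verify to be mutually consistent. The actual disproof is then deferred to a ``finite system of linear and parity constraints'' on local domino configurations whose infeasibility you do not establish and which you concede ``seems to require a careful, and perhaps partly computer-assisted, case analysis.'' As written, this is a research plan for the hard step, not a proof of it, and there is no evidence that the proposed local-to-global propagation would terminate in a contradiction at all.

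Moreover, your guiding claim that ``no first- or second-order count can exclude the coloring'' is what leads you away from the short argument the paper gives, which uses exactly such counts --- but applied to the Fourier coefficients rather than to the distance distribution. Writing $q=11$ on black and $-5$ on white, the coefficient of $q$ at each weight-$4$ character (equivalently, weight-$8$ in your convention) is the \emph{integer} $m-5$, where $m$ is the number of black vertices in the corresponding $4$-face; Parseval gives $\sum_x w_x^2=55$, so the support $S$ has at most $55$ elements. Projecting onto any $3$-face gives $\sum_{y\subseteq x}w_x=2m-5$, which is \emph{odd}, hence nonzero, so every one of the ${{12}\choose{3}}=220$ triples of coordinates is contained in some member of $S$; since a $4$-set covers only $4$ triples, $|S|\ge 220/4=55$. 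Equality forces $S$ to be a Steiner quadruple system $S(3,4,12)$, which fails the divisibility condition (each point would lie in $\left({11 \atop 2}\right)/3=55/3$ blocks). So the obstruction is not in the mutual arrangement of dominoes but in the arithmetic of the Fourier support; the integrality and parity of the coefficients are the ``cheap'' invariants your survey omitted.
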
 
\begin{proof}
Assume, seeking a contradiction, that such a coloring exists.
As in \cite{2}, we define the real-valued function $q$ on $H$ that equals $11$ on the black vertices and $5$ on the white vertices.
It follows from the definition of a perfect coloring that $q$ is an eigenfunction of the adjacency matrix of $H$ with the eigenvalue~$4$.

We will use the approach from \cite{2}. 
Let us remind the notation for the faces of $H$ and the basis $\{f^x\}$ from eigenfunctions.

For $x,y\in H$, $x\cap y = \emptyset$, we define the set $[x]+y=\{z\cup y\,|\,z\subseteq x\}$. This set is called a $k$-face of the hypercube, where $k=|x|$.

For every $x\in H$, the function $f^x$ is defined as 
$$f^x(z)=(-1)^{|z\backslash x|}.$$
The collection $\{f^x\,|\, x\in H\}$ is an orthogonal basis of the space of real-valued functions on $H$. 
Consider the expansion of $q$ in the basis $\{f^x\}$:
$$ q=\sum_{x} w_xf^x $$
where the sum is over all the vectors of weight $4$.
For any vectors $x$, $y$, it is easy to check that 
$\langle \chi^{[x]}, f^y\rangle = 2^{|x|}$ if $x\subseteq y$;
otherwise $\langle \chi^{[x]}, f^y\rangle = 0$.
(Recall that $[x]$ is the smallest face that contains both vertices $x$ and $0$ and $\chi^{[x]}$ is its characteristic function.) From here, we can find the coefficients $w_x$:
$$ \langle \chi^{[x]},q \rangle = 16 w_x = \sum_{v \in [x]} q(v)
= 11m-5(16-m),$$
$$ w_x=-5+m,$$
where $m$ is the number of the black vertices in the face $[x]$. In particular, all the coefficients are integer.

The value $\langle q,q \rangle$ can be calculated in two ways. At first, it equals $2^{12}\sum w_x^2$;
at second, from counting the number of black and white vertices, it equals $5\cdot 2^8\cdot 11^{2}+ 11\cdot 2^8\cdot 5^2 = 55 \cdot 2^{12}$. It follows that $\sum w_x^2 = 55$. 

Hence there are at most $55$ nonzero coefficients. 
Denote $S=\{x \,|\, w_x\ne 0\}$; $|S|\le 55$.

Now consider an arbitrary $3$-face $[y]$. We have:
$$\langle \chi^{[y]},q \rangle = \sum_{[y]\subset[x]}8w_x=11m-5(8-m),$$
$$\sum_{[y]\subset[x]}w_x= 2m-5 \ne 0,$$
where $m$ is the number of the black vertices in the face $[y]$. In particular, this means that each vector $y$ of weight $3$ is contained (as a subset) in at least one vector $x\in S$. Hence $|S| \ge \left({12 \atop 3}\right) /4 = 55$.

So, $|S|=55$, and every weight-$3$ vector is contained in exactly one vector of $S$. But this is impossible, because each of $12$ coordinates must belong to $55\cdot 4/12$ vectors of $S$, which is not an integer. This contradiction proves the theorem.
\end{proof}

\begin{theorem} There exist perfect colorings of $12$-dimensional hypercube with parameters
$(3,9,7,5)$.
\end{theorem}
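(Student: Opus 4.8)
This is an existence statement, so unlike the previous theorem the plan is to exhibit a concrete coloring and then verify that it has the required local parameters. The first thing to note is that $(3,9,7,5)$ corresponds to $i=3$, $x=4$, which lies in neither of the two product series described above (for $x=4$ those yield only $i=0$ and $i=2$); hence a genuinely new construction is needed, and there is no evident way to obtain it by the product construction from smaller bound-attaining colorings.

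I would first set up the problem in the language of the preceding proof. A $2$-coloring is perfect with these parameters exactly when the associated function $q$, equal to $9$ on the black and $-7$ on the white vertices, is an eigenfunction of the adjacency matrix with eigenvalue $-4$; equivalently, $q$ must lie in the span of the $f^x$ with $|x|=4$. Repeating the two evaluations of $\langle q,q\rangle$ and of $\langle\chi^{[x]},q\rangle$ with the new values gives $w_x=m-7$ (so all coefficients are again integers) and $\sum_x w_x^2=63$, while the $3$-face count yields $\sum_{[y]\subset[x]}w_x=2m-7\ne 0$, so every weight-$3$ vector still lies in some $x\in S=\{x\,|\,w_x\ne 0\}$, forcing $\left({12\atop 3}\right)/4=55\le|S|\le 63$. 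The decisive difference from the case $(1,11,5,7)$ is that here the interval for $|S|$ is wide and the analogous divisibility count no longer produces a contradiction, so there is room for a coloring to exist; the remaining task is to actually produce one.

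To build the coloring I would search within a highly symmetric family rather than among all $2^{4096}$ colorings. Concretely, I would fix a large subgroup $G$ of $\mathrm{Aut}(H_{12})$—for instance a group generated by coordinate permutations preserving a partition of the $12$ coordinates into blocks, together with a group of translations—and look for a $G$-invariant black set of size $1792$. Invariance collapses both the description of the coloring and its verification to a few orbit representatives: one needs only check, for one black and one white vertex in each orbit, that it has exactly $3$, respectively $7$, black neighbors. An equivalent route is to construct the coefficient set $S$ directly as a family of weight-$4$ vectors with $w_x\in\{-1,+1\}$ (so that $|S|=63$, each face $[x]$ with $x\in S$ contains $6$ or $8$ black vertices, and every other $4$-face contains exactly $7$), arranged so that every weight-$3$ vector is covered, and then recover $q$ and check that the resulting function is indeed two-valued.

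The main obstacle is precisely the discovery of the construction: the eigenfunction condition is only a linear constraint, whereas the requirement that $q$ take exactly the two values $9$ and $-7$ is a rigid combinatorial condition, and no product or coset shortcut is available. I expect the crux to be the choice of the symmetry group—or of a combinatorial gadget such as a suitable design, or a layered construction splitting $H_{12}$ as $H_6\times H_6$ or $H_3\times H_9$—small enough to admit an invariant coloring yet large enough to make both the search and the final verification feasible. Once a candidate is in hand, confirming the parameters is a routine finite check.
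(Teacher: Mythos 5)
There is a genuine gap: your text is a research plan, not a proof. For an existence theorem the entire burden lies in exhibiting a coloring (or at least a verifiable description of one), and you stop precisely at that point --- you yourself write that ``the main obstacle is precisely the discovery of the construction'' and that the verification would be routine ``once a candidate is in hand,'' but no candidate is ever in hand. The spectral preliminaries you carry over from the nonexistence proof (the eigenvalue $-4$, the values $9$ and $-7$, $w_x=m-7$, $\sum w_x^2=63$, $55\le|S|\le63$) are correct and show that the earlier counting argument does not rule the parameters out, but ``no contradiction'' is not ``existence,'' and neither the $G$-invariance idea nor the proposal to build $S$ as a set of $\pm1$ coefficients is developed far enough to produce anything checkable. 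In particular the second route is harder than you suggest: being an eigenfunction is linear, but forcing $q$ to be two-valued is the whole difficulty, as you note, and you give no mechanism for achieving it.

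For comparison, the paper's proof is exactly the explicit construction you gesture at with the phrase ``splitting $H_{12}$ as $H_6\times H_6$,'' so your instinct about the right shape is sound. The paper writes $H_{12}$ as a union of $64$ layers $L_x$ indexed by an auxiliary $6$-cube $X$ (each layer itself a copy of $X$), partitions the vertices of $X$ into $4$ black vertices, $12$ white vertices and $12$ disjoint grey $2$-faces, invariantly under a small affine group $A=\langle\alpha,\beta\rangle$; it then colors black layers entirely $1$, white layers entirely $0$, and each union of layers over a grey face by a propagation rule that makes it properly $2$-colored, with the key local property that every outside vertex adjacent to such a union has exactly two neighbors there, one of each color. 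The parameter check then reduces to three orbit cases. None of this can be recovered from your proposal without doing the actual work, so the proof must be regarded as incomplete.
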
 
\begin{proof} We will give an explicit construction of such colorings, which is, in concept, similar to
the construction from [1].

Let us start the construction from the auxiliary $6$-dimensional cube $X$.
We will denote the coordinates in $X$ by the symbols from the set $\Omega = \{a_1,a_2,a_3,b_1,b_2,b_3\}$.
For the convenience, we will denote an element of $X$ by the list of its nonzero coordinates, omitting
the sign ``$+$'' between them. We will represent the $12$-dimensional hypercube $H$ as the union
of pairwise disjoint layers $L_x$, $x \in X$.
The elements of every layer are also marked by vectors from $X$: $L_x = \{y_x | y\in X\}$
(where $y_x = (y,y+x)$ -- transl. rem.).
The layers $L_x$ are independent sets in $H$.
Two layers $L_x$, $L_{x'}$ with adjacent $x$, $x'$ induce a bipartite graph of order 2;
explicitly, if $x' = x + a$, $a\in \Omega$, then $y_{x'}$ is adjacent with $y_x$ and $(y+a)_x$.

Let us partition all the vertices of $X$ into $4$ black vertices, $12$ white vertices, and
$12$ pairwise disjoint $2$-faces (their vertices will be called grey).
The partition will be invariant with respect to the group $A = \langle \alpha, \beta \rangle$
of automorphisms of $X$ generated by the automorphism of the coordinate permutation
$\alpha = (a_1 a_2 a_3)(b_1 b_2 b_3)$ and the affine automorphism $\beta(x)= a_1 a_2 a_3 + \sigma(x)$
where $\sigma = (a_1 b_1)(a_2 b_2)(a_3 b_3)$.

The black are the four vertices of the orbit $0^A$, namely:
$0$, $a_1 a_2 a_3$, $a_1 a_2 a_3 b_1 b_2 b_3$, $b_1 b_2 b_3$.

The white are the $12$ vertices of the orbit $a_1^A$, namely:
$0+a_i$, $a_1 a_2 a_3 +a_i$, $a_1 a_2 a_3 b_1 b_2 b_3 +a_i$, $b_1 b_2 b_3 +a_i$.

At last, the other vertices of $X$ are partitioned into the $2$-faces that are the images under the action of $A$
of the face $b_1+\langle a_2, a_3\rangle$, namely: \\
\indent  $ b_i+\langle a_j, a_k\rangle$, \\
\indent  $a_j a_k + \langle b_j b_k \rangle$, \\
\indent  $a_1 a_2 a_3 b_j b_k + \langle a_j,a_k \rangle $, \\
\indent  $a_i b_1 b_2 b_3 + \langle b_j,b_k\rangle $, \\
where $i$, $j$, $k$ is an arbitrary permutation of the indices $1$, $2$, $3$.

Now, define a coloring $c:L_x \to \{0,1\}$ for every layer $L_x$.
If $x$ is black, then set $c(L_x)=1$. Similarly,  $c(L_x)=0$ for a white $x$.

For every grey face $G = x + \langle p,q \rangle$ from our partition
(where $x$ is the image of $b_1$ under some automorphism from $A$, and $p$, $q$
are the elements of $\Omega$ that specify the direction of the face),
let $L_G = \{y_z | y\in X, z\in G\}$ be the union of the corresponding layers.
We arbitrarily  choose the value $c(G)=c(0_x)$ for one vertex from $L_x$.
The other values $c(y_z)$ for $y_z \in L_G$ are defined,
starting from this vertex and applying the following rules:

For $r \in \Omega$, set $c((y+r)_z) = c(y_z)$ if $r \in \{ p,q \}$, and $c((y+r)_z)=1-c(y_z)$ otherwise;

set $c(y_{z+r})=1 - c(y_z)$ for any $r\in\{p,q\}$.

It is easy to see that these rules uniquely determine the colors of all the vertices $c(y_z)$ for
$y \in X$, $z \in G$, and that any two adjacent vertices from this set have different colors.

Observe also that

(*) {\em any vertex out of $L_G$ adjacent with $L_G$ has exactly two neighbors in $L_G$, and the colors
of these two neighbors are different.}

Now, it is not difficult to calculate the number of neighbors of colors $0$ and $1$ for each vertex of $X$.

If $z\in X$ is a black vertex, then it has three white and three grey neighbors in $X$.
Every vertex $y_z \in L_z$ has color $1$ and, according to (*), 
has three color-$1$ neighbors and
$3+3 \cdot 2 = 9$ neighbors of color $0$.

If $z\in X$ is a white vertex, then it has one black and five grey neighbors in $X$.
Every vertex $y_z \in L_z$ has color $0$ and, according to (*), 
has five color-$0$ neighbors and
$5+1 \cdot 2 = 7$ neighbors of color $1$.

If $z$ belongs to a grey face $G$, then it has two neighbors inside the face and either
one black, two white and one grey, or one white and three grey neighbors outside the face.
Consequently, each vertex $y_z$ has four neighbors of color different from its color inside $L_G$;
and, calculated using (*) as above, three outside neighbors of color $1$ and five, of color $0$,
independently of the color of $y_z$.

So, each vertex of $H$ has the required number of neighbors of each color, and a coloring
with required parameters is constructed. 
\end{proof}

Finally, we note that the arbitrary choice of the twelve values $c(G)$ in the construction
enables to construct non-isomorphic perfect colorings with parameters $(3,9,7,5)$.
Nevertheless, the number of pairwise non-isomorphic such colorings, as well as the existence of
colorings with parameters $(3,9,7,5)$
that cannot be obtained by our construction, remains unknown.

\end{document}